\newtheorem{theorem}{Theorem}[section]
\newtheorem{cor}[theorem]{Corollary}
\newtheorem{prop}[theorem]{Proposition}
\theoremstyle{definition}
\theoremstyle{remark}
\numberwithin{equation}{section}
\def\bC{\mathbb{C}}
\def\bM{\mathbb{M}}
\def\bR{\mathbb{R}}
\def\bR{\mathbb{R}}
\begin{document}
\baselineskip=15pt

\title{Positive linear maps on normal matrices}

\author{ Jean-Christophe Bourin{\footnote{This research was supported by the French Investissements d’Avenir program, project ISITE-BFC (contract ANR-15-IDEX-03).
}} \ and  Eun-Young Lee{\footnote{This research was supported by
Basic Science Research Program through the National Research
Foundation of Korea (NRF) funded by the Ministry of
Education (NRF-2018R1D1A3B07043682)}  }}

\date{ }

\maketitle

\vskip 10pt\noindent
{\small
{\bf Abstract.}  {  For a positive linear map $\Phi$ and a normal matrix $N$, we show that $|\Phi(N)|$ is bounded by some simple linear combinations in the unitary orbit of $\Phi(|N|)$. Several elegant sharp inequalities are derived, for instance for the Schur product of two normal matrices $A,B\in\bM_n$,
$$
|A\circ B| \le |A|\circ|B|+ \frac{1}{4}V(|A|\circ|B|)V^*
$$
for some unitary $V\in\bM_n$, where the constant $1/4$ is optimal.}
\vskip 5pt\noindent
{\it Keywords.} Matrix inequalities,  Positive linear maps, Schur products, unitary orbits, matrix geometric mean.
\vskip 5pt\noindent
{\it 2010 mathematics subject classification.} 47A30, 15A60.
}

\section{Introduction } 

Let $\bM_n$ denote the space of complex $n\times n$ matrices and let $\bM_n^+$ stands for the positive (semi-definite) cone.
A linear map $\Phi:\bM_n\to\bM_m$ is called positive if $\Phi(\bM_n^+)\subset \bM_m^+$. A nice exposition of these maps on matrix algebras can be found in Bhatia's  book \cite{Bhatia2}.
The Schur product with $A\in\bM_n^+$, $X\mapsto A\circ X $ is a basic example of positive linear map on $\bM_n$ (Schur's theorem; for which it suffices to check that the Schur product of two rank one positive matrices is again a rank one positive matrix).

A famous Schwarz inequality due to Man-Duen Choi \cite{Choi}, and previously to Kadison  for the Hermitian case,  says that
$$
\Phi(N^*)\Phi(N) \le  \Phi(N^*N)
$$
for all normal matrices $N$ in a unital matrix algebras ${\mathcal{A}}$ on which acts the  unital positive linear mas $\Phi$. Written as 
$|\Phi(N)|^2\le \Phi(|N|^2)$, this is a  noncommutative Jensen inequality for the convex function  $z\mapsto |z|^2$ on $\bC$. Choi's inequality is related  to another classical fact,  the Russo-Dye theorem \cite{RD} stating that unital positive linear maps are contractive, or more generally, that any positive linear maps attains its norm at the identity.

Comparing $|\Phi(N)|$ and $\Phi(|N|)$ via operator inequalities  seems more delicate.  In this note, we aim to obtain such comparisons by using additive combination in the unitary orbits of  $\Phi(|N|)$.  As a consequence we point out some refinements of the classical Russo-Dye theorem stating that any positive linear maps attains its norm at the identity. The results may also be useful to obtain some estimates for
various non $\bC$-linear maps; we close this introduction by providing such an example. For $A = (a_{i,j})\in\bM_n$, we consider the conjugate matrix $\overline{A} := A^{*T} = (\overline{a_{i,j}} )$, and the
matrix with the real parts, $A_{\bR} := (A + \overline{A}/2$.

\begin{prop} If A is a normal matrix, then $|\det A_{\bR}| \le  \det |A|_{\bR}$.
\end{prop}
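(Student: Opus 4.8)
The plan is to recognise the non-$\bC$-linear map $A\mapsto A_{\bR}$ as an honest positive linear map evaluated at a normal matrix, so that the comparison results of this note apply. I pass to $\bM_{2n}$ and put
$$
M=\begin{pmatrix} A & 0\\ 0 & \overline{A}\end{pmatrix},
$$
which is normal because conjugation preserves the relation $AA^{*}=A^{*}A$. Let $\Phi:\bM_{2n}\to\bM_n$ be the unital (completely) positive linear map $\Phi\left(\begin{smallmatrix} X & Y\\ Z & W\end{smallmatrix}\right)=\tfrac12(X+W)$. Then $\Phi(M)=\tfrac12(A+\overline{A})=A_{\bR}$.

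Next I compute $\Phi(|M|)$. As $M$ is block diagonal, $|M|=\diag(|A|,|\overline{A}|)$, and the elementary identity $\overline{A}^{*}\overline{A}=\overline{A^{*}A}$ together with the fact that entrywise conjugation commutes with the positive square root gives $|\overline{A}|=\overline{|A|}$. Hence $\Phi(|M|)=\tfrac12(|A|+\overline{|A|})=|A|_{\bR}$. Being the image of the positive matrix $|M|$ under the positive map $\Phi$, the matrix $|A|_{\bR}=\Phi(|M|)$ is positive semidefinite, so $\det|A|_{\bR}\ge 0$ and the asserted inequality is meaningful; by continuity I may assume it is positive definite.

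Now I apply the main comparison of this note to $\Phi$ and the normal $M$. Among the bounds of $|\Phi(M)|$ by combinations in the unitary orbit of $\Phi(|M|)$, the one fit for determinants is the geometric-mean form: there is a unitary $V\in\bM_n$ with
$$
|A_{\bR}|=|\Phi(M)|\le \Phi(|M|)\,\#\,V\Phi(|M|)V^{*}=|A|_{\bR}\,\#\,V|A|_{\bR}V^{*}.
$$
Taking determinants, and using monotonicity of $\det$ on the positive cone, the identity $\det(P\,\#\,Q)=\sqrt{\det P\,\det Q}$, and the unitary invariance $\det(V|A|_{\bR}V^{*})=\det|A|_{\bR}$, I obtain
$$
|\det A_{\bR}|=\det|A_{\bR}|\le \det\bigl(|A|_{\bR}\,\#\,V|A|_{\bR}V^{*}\bigr)=\det|A|_{\bR},
$$
which is the claim.

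The one thing that must be got right is the direction of the determinant step. The additive combinations in the unitary orbit highlighted in the abstract, e.g. $\tfrac12\bigl(\Phi(|N|)+V\Phi(|N|)V^{*}\bigr)$, are useless here, since by Minkowski's determinant inequality their determinant is $\ge\det\Phi(|N|)$, the wrong way round. It is exactly the geometric-mean form, whose determinant equals $\det\Phi(|N|)$, that converts the operator inequality into the sharp scalar one. Apart from this, the only genuinely paper-specific input is the comparison inequality itself; the identities $|\overline{A}|=\overline{|A|}$, $\Phi(M)=A_{\bR}$ and $\Phi(|M|)=|A|_{\bR}$ are routine.
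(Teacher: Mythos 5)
Your proof is correct and follows essentially the same route as the paper: the same block-diagonal dilation $\diag(A,\overline{A})$, the same positive linear map averaging the two diagonal blocks, the same identity $|\overline{A}|=\overline{|A|}$, and the same input from Theorem \ref{mainthm}. The only difference is the finish: the paper extracts the weak log-majorisation $|A_{\bR}|\prec_{w\log}|A|_{\bR}$ (Corollary 2.4, via Corollary \ref{corfolk}(a)) and reads off the determinant inequality as its top-order case, whereas you take determinants directly from the geometric-mean operator inequality using monotonicity of $\det$ on the positive cone and $\det(P\,\#\,Q)=\sqrt{\det P\,\det Q}$ --- a cosmetic difference, since Corollary \ref{corfolk}(a) is itself derived in the paper from that same geometric-mean inequality.
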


This is a consequence of Corollary 2.4. This inequality does not hold for some
nonnormal two-by-two matrices.

\section{Positive linear maps}

We first single out the simplest case of our main theorem.

\begin{prop}   Let $\Phi: \bM_n\to \bM_m$ be  a positive linear map and let $N\in \bM_n$ be normal.  Then, there exists  a unitary $V\in\bM_m$ such that
$$
   |\Phi(N)|  \le \frac{\Phi(|N|) +V\Phi(|N|)V^*}{2}.
$$
\end{prop}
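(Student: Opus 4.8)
The plan is to deduce the inequality from the positivity of a single $2m\times 2m$ block matrix. Writing $T:=\Phi(N)$ and $S:=\Phi(|N|)\ge 0$, I would first establish that
\[
\mathcal{M}:=\begin{pmatrix} S & T^* \\ T & S \end{pmatrix}\ge 0 \quad\text{in } \bM_{2m},
\]
and then recover the stated bound from $\mathcal{M}\ge 0$ by a polar decomposition together with an elementary test-vector argument. A naive attempt through the Choi--Schwarz inequality $|T|^2\le \Phi(|N|^2)$ is unavailable here, since $\Phi$ is only assumed positive and not $2$-positive; the whole point is to exploit the normality of $N$ rather than any higher positivity of $\Phi$.

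The crux is the positivity of $\mathcal{M}$, and this is exactly where normality enters. Diagonalising $N=\sum_k \lambda_k e_ke_k^*$ in an orthonormal eigenbasis $\{e_k\}$ and writing $\lambda_k=|\lambda_k|\omega_k$ with $|\omega_k|=1$, one has the separable decomposition
\[
\begin{pmatrix} |N| & N^* \\ N & |N| \end{pmatrix}
=\sum_k |\lambda_k|\begin{pmatrix} 1 & \overline{\omega_k} \\ \omega_k & 1 \end{pmatrix}\otimes e_ke_k^* ,
\]
under the identification $\bM_{2n}\cong \bM_2\otimes \bM_n$ by which the four $n\times n$ blocks correspond to the $2\times 2$ outer structure. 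Each $2\times 2$ factor equals $\binom{1}{\omega_k}\binom{1}{\omega_k}^*\ge 0$ and each $e_ke_k^*\ge 0$, so the right-hand side is a sum of simple tensors of positive matrices. Applying $\mathrm{id}_{\bM_2}\otimes\Phi$ sends each term $P\otimes Q$ (with $P,Q\ge 0$) to $P\otimes \Phi(Q)\ge 0$; hence the image remains positive \emph{even though $\Phi$ need not be $2$-positive}, and since $\mathrm{id}_{\bM_2}\otimes\Phi$ acts block-entrywise (using $\Phi(N^*)=\Phi(N)^*$) this image is precisely $\mathcal{M}$. I expect verifying this separability and the block-entrywise identity to be the main obstacle: it is the step that genuinely uses that $N$ is normal, and it is what rules out the domination failing from the weaker numerical-range condition alone.

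With $\mathcal{M}\ge 0$ in hand, the conclusion is routine. Taking a polar decomposition $T=W|T|$ with $W$ unitary (completing the partial isometry to a unitary if $T$ is singular), I would conjugate $\mathcal{M}$ by the unitary $\mathrm{diag}(I,W^*)$; since $W^*T=|T|=T^*W$, this yields
\[
\begin{pmatrix} S & |T| \\ |T| & W^*SW \end{pmatrix}\ge 0 .
\]
Finally, evaluating this positive block matrix on the vectors $(x,-x)$ gives $\langle |T|x,x\rangle\le \tfrac12\big(\langle Sx,x\rangle+\langle W^*SWx,x\rangle\big)$ for every $x$, that is $|T|\le \tfrac12\big(S+W^*SW\big)$. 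Setting $V:=W^*$ delivers the asserted inequality $|\Phi(N)|\le \tfrac12\big(\Phi(|N|)+V\Phi(|N|)V^*\big)$.
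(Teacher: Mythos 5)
Your proof is correct, and it reaches the paper's key intermediate fact --- the positivity of the block matrix $\begin{pmatrix}\Phi(|N|) & \Phi(N)\\ \Phi(N^*) & \Phi(|N|)\end{pmatrix}$, which is \eqref{eqfund} up to a harmless rearrangement of blocks --- by a genuinely different mechanism. The paper (whose proof of this proposition is the case $\beta=1/2$ of Theorem \ref{mainthm}) decomposes the \emph{map}: restricted to the commutative algebra spanned by the spectral projections of $N$, the map $\Phi$ is written as a sum of congruences $X\mapsto Z_{i,j}^*XZ_{i,j}$, hence is completely positive there (Stinespring's theorem for commutative domains), and the block positivity follows by applying this $2$-positivity to $\begin{pmatrix}|N| & N\\ N^* & |N|\end{pmatrix}\ge 0$. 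You instead decompose the \emph{matrix}: using the eigenbasis of $N$ you exhibit $\begin{pmatrix}|N| & N^*\\ N & |N|\end{pmatrix}$ as a separable element $\sum_k |\lambda_k|\, P_k\otimes e_ke_k^*$ with each $P_k$ a rank-one positive $2\times 2$ matrix, so that $\mathrm{id}_{\bM_2}\otimes\Phi$ preserves its positivity term by term even though $\Phi$ is merely positive; normality enters in exactly the same place (the spectral theorem), but no structural result about $\Phi$ is needed. These are dual views of the same phenomenon: your route is more economical and self-contained for the proposition at hand, while the paper's route establishes Stinespring's general fact in passing and is set up so that the same computation yields the full family of inequalities (arbitrary $\beta>0$, and the geometric-mean refinement) in Theorem \ref{mainthm}. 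Your endgame --- polar decomposition $T=W|T|$, unitary conjugation by $\mathrm{diag}(I,W^*)$, then testing on vectors $(x,-x)$ --- is the same computation as the paper's one-step compression of \eqref{eqfund} by $\begin{pmatrix}\alpha^{-1/2}V & -\alpha^{1/2}I\end{pmatrix}$ at $\alpha=1$.
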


Thus, $|\Phi(N)|$ is dominated by the mean of two elements in the unitary orbit of
$\Phi(|N|)$. There exists a whole family of such sharp inequalities, this is the content
of the following main theorem.  $R\in\bM_n$ is called a reflexion if $R^2=I$.

\vskip 5pt
\begin{theorem}\label{mainthm}     Let $\Phi: \bM_n\to \bM_m$ be  a positive linear map, $n,m\neq1$,  and let $N\in \bM_n$ be normal. Fix $\beta> 0$. Then, there exists  a unitary $V\in\bM_m$ such that
$$
   |\Phi(N)|  \le \beta\Phi(|N|) +\frac{1}{4\beta}V\Phi(|N|)V^*.
$$
If $\beta \ge 1/2$, then the constant $1/4\beta$ is the smallest possible one and this  inequality is sharp even if we confine $N$ to the class of Hermitian reflexions in $\bM_n$. 

A refinement of this inequality is the double inequality
$$
   |\Phi(N)| \le  \Phi(|N|)\# V\Phi(|N|)V^* \le \beta\Phi(|N|) +\frac{1}{4\beta}V\Phi(|N|)V^*.
$$
where $\#$ stands for the usual geometric mean.
\end{theorem}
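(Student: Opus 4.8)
The plan is to turn a scalar block-positivity identity into an operator inequality after applying $\Phi$, and then to read off the geometric mean from a maximality principle; the two displayed inequalities then split into a ``hard'' geometric-mean part (using positivity of $\Phi$ in an essential way) and a routine arithmetic--geometric part. First I would use the spectral decomposition $N=\sum_j\lambda_ju_ju_j^*$ of the normal matrix, writing $\lambda_j=|\lambda_j|\omega_j$ with $|\omega_j|=1$, so that $|N|=\sum_j|\lambda_j|u_ju_j^*$ and
$$\begin{pmatrix}|N| & N^*\\ N & |N|\end{pmatrix}=\sum_j|\lambda_j|\,\bigl(\begin{smallmatrix}1&\bar\omega_j\\ \omega_j&1\end{smallmatrix}\bigr)\otimes u_ju_j^*.$$
Each summand is a rank-one positive $2\times2$ block $\bigl(\begin{smallmatrix}1\\ \omega_j\end{smallmatrix}\bigr)\bigl(\begin{smallmatrix}1&\bar\omega_j\end{smallmatrix}\bigr)$ tensored with the positive rank-one $u_ju_j^*$. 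Applying $\Phi$ entrywise to the blocks leaves each summand positive, because $\Phi(u_ju_j^*)\ge0$; crucially this uses only positivity of $\Phi$, never $2$-positivity. Hence, using $\Phi(N^*)=\Phi(N)^*$,
$$\begin{pmatrix}\Phi(|N|) & \Phi(N)^*\\ \Phi(N) & \Phi(|N|)\end{pmatrix}\ge0.$$
I expect this to be the main conceptual obstacle: a generic positive map does \emph{not} respect $2\times2$ block positivity, and the point is that this particular block matrix is a sum of tensor products of positives, which any positive map handles.

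Next I would extract the geometric mean. Writing a polar decomposition $\Phi(N)=W|\Phi(N)|$ with $W$ unitary and conjugating the block matrix by $\mathrm{diag}(I,W^*)$ turns the off-diagonal corner into $|\Phi(N)|$ and the lower diagonal block into $W^*\Phi(|N|)W$, giving
$$\begin{pmatrix}\Phi(|N|) & |\Phi(N)|\\ |\Phi(N)| & W^*\Phi(|N|)W\end{pmatrix}\ge0.$$
By the maximality characterization of the geometric mean (the largest Hermitian $Y$ with $\bigl(\begin{smallmatrix}A&Y\\ Y&B\end{smallmatrix}\bigr)\ge0$ is $A\#B$), with $V:=W^*$ this yields $|\Phi(N)|\le\Phi(|N|)\#V\Phi(|N|)V^*$; if $\Phi(|N|)$ is singular I would apply this to $\Phi(|N|)+\varepsilon I$ and let $\varepsilon\downarrow0$. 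The second inequality is then immediate from homogeneity $(2\beta A)\#(\tfrac1{2\beta}B)=A\#B$ together with $A'\#B'\le\tfrac12(A'+B')$, which give $\Phi(|N|)\#V\Phi(|N|)V^*\le\beta\Phi(|N|)+\tfrac1{4\beta}V\Phi(|N|)V^*$. This completes the double inequality and hence the first displayed bound for every $\beta>0$.

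For optimality when $\beta\ge1/2$ I would exhibit a saturating example with $n=m=2$. Take the Hermitian reflection $N=\mathrm{diag}(1,-1)$, so $|N|=I$, and the positive map $\Phi(X)=x_{11}uu^*+x_{22}vv^*$ for unit vectors $u,v$ with $\langle u,v\rangle=\cos\theta$. Then $\Phi(|N|)=P=uu^*+vv^*$ has eigenvalues $1\pm\cos\theta$, while $\Phi(N)=uu^*-vv^*$ is traceless with $\det=-\sin^2\theta$, so $|\Phi(N)|=\sin\theta\,I$. A short computation shows that $\det(\beta P+cVPV^*)$ is affine and decreasing in the $(1,1)$-entry of $VPV^*$, hence maximal at the eigenvector-swapping $V$; since the trace is constant in $V$, the quantity $\lambda_{\min}(\beta P+cVPV^*)$ is maximized there, equal to $\beta(1-\cos\theta)+c(1+\cos\theta)$ provided $c\le\beta$. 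The inequality $\sin\theta\,I\le\beta P+cVPV^*$ for some $V$ therefore forces
$$c\ \ge\ \frac{\sin\theta-\beta(1-\cos\theta)}{1+\cos\theta}\ =\ \tan\tfrac\theta2-\beta\tan^2\tfrac\theta2,$$
whose supremum over $\theta$ is $\tfrac1{4\beta}$, attained at $\tan\tfrac\theta2=\tfrac1{2\beta}$. Finally I would note that the hypothesis $\beta\ge1/2$ is exactly what guarantees $c=\tfrac1{4\beta}\le\beta$, so that the swap evaluation of $\lambda_{\min}$ is the operative one and the bound $\tfrac1{4\beta}$ is genuinely reached.
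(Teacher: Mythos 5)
Your proof is correct, and its first half is essentially the paper's argument in streamlined form. The paper also reduces everything to the block positivity
$\bigl(\begin{smallmatrix}\Phi(|N|) & \Phi(N)\\ \Phi(N^*) & \Phi(|N|)\end{smallmatrix}\bigr)\ge 0$,
but it establishes this by first proving Stinespring's fact that $\Phi$ restricted to the commutative algebra spanned by the spectral projections of $N$ is a sum of congruence maps $X\mapsto Z_{i,j}^*XZ_{i,j}$; your direct decomposition of the block matrix into terms $|\lambda_j|\bigl(\begin{smallmatrix}1&\bar\omega_j\\ \omega_j&1\end{smallmatrix}\bigr)\otimes u_ju_j^*$, each sent by $\Phi$ to a positive tensor, is the same idea with the intermediate representation skipped. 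The paper then gets the arithmetic bound directly by congruence of the block matrix with the row $\bigl(\alpha^{-1/2}V\ \ -\alpha^{1/2}I\bigr)$ and obtains the geometric-mean refinement separately from the maximality property; you derive the geometric-mean inequality first and deduce the arithmetic one from homogeneity of $\#$ plus AM--GM, an equivalent ordering. Where you genuinely diverge is sharpness: the paper takes the Schur multiplier $Z\mapsto A\circ Z$ with $A=\bigl(\begin{smallmatrix}2\beta&1\\ 1&(2\beta)^{-1}\end{smallmatrix}\bigr)$ and the reflexion $R=\bigl(\begin{smallmatrix}0&1\\ 1&0\end{smallmatrix}\bigr)$, for which a single comparison of largest eigenvalues, $\lambda_1(|A\circ R|-\beta(A\circ|R|))=\tfrac12\le\tfrac{c}{4\beta}\lambda_1(A\circ|R|)=\tfrac{c}{2}$, forces $c\ge1$ with no optimization (the $\beta$-dependence is built into $A$); you instead fix $N=\mathrm{diag}(1,-1)$, vary the map over the angle $\theta$, identify the optimal unitary by the determinant/trace argument, and optimize over $\theta$ to reach $1/(4\beta)$ at $\tan(\theta/2)=1/(2\beta)$. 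Your route is longer but exhibits where the extremal configuration comes from and shows that $\beta\ge1/2$ is exactly the condition $1/(4\beta)\le\beta$ making the swap evaluation of $\lambda_{\min}$ operative; the paper's example is shorter but more opaque. One shared looseness: both you and the paper reduce sharpness to $n=m=2$ with only a brief remark (the paper invokes compressions to $2\times2$ corners), and a fully detailed lifting to arbitrary $n,m\ge2$ needs a small interlacing observation --- harmless in your case, since your Weyl-type bounds on $\lambda_{\min}$ survive compression.
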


\vskip 5pt
We do not know wether the theorem is sharp or not when $0<\beta<1/2$.
 A reader familiar with the theory of positive maps will note that, in the course of the proof, we quickly establish a basic fact due to Stinespring \cite{Stinespring} stating that  positive maps defined on a commutative domain are completely positive. 

\vskip 5pt
\begin{proof} Consider the spectral decomposition of $N$,
$$
N=\sum_{i=1}^n \sigma_i E_i
$$
where $\sigma_i$ runs over the eigenvalues of $N$ counted with their multiplicities and $E_i$ are 
 rank one projections, $i=1,\ldots, n$. Note that $E_i=x_i x_i^*$  for some column vectors $x_i\in \bM_{n,1}$. Let  ${\mathcal{N}}$ be the $*$-commutative subalgebra   spanned by the projections $E_i$ ,$i=1,\ldots, n$.
 When restricted to  ${\mathcal{N}}$, the map $\Phi$ has the form
\begin{equation}\label{decomp1}\Phi(X)=\sum_{i=1}^m\sum_{j=1}^n Z_{i,j}^* XZ_{i,j}\end{equation}
for some rank 1 or 0 matrices $Z_{i,j}\in\bM_{n,m}$, $i=1,\ldots,n$, $j=1,\ldots,m$. Indeed, the map $\Phi$ on ${\mathcal{N}}$ is determined by its values on the projections $E_i$, hence, setting
$Z_{i,j}= x_i R_{i,j}$ where $R_{i,j}\in\bM_{1,m}$ is the $j$-th row of $\Phi(E_i)^{1/2}$, we obtain the representation \eqref{decomp1}. 

This decomposition of $\Phi$ on ${\mathcal{N}}$ as a sum of congruences maps ensures
that
$$
\begin{pmatrix}
\Phi(A)& \Phi(B)\\
\Phi(B^*)& \Phi(C)
\end{pmatrix} \ge 0
$$
whenever the matrices $A,B,C$ in ${\mathcal{N}}$ satisfy
$$
\begin{pmatrix}
A& B\\
B^*& C
\end{pmatrix} \ge 0.
$$
In particular, this happens if $A=C=|N|$ and $B=N$, as
$$
\begin{pmatrix}
|z| &z \\
z^*&|z|
\end{pmatrix}\ge 0 
$$
for all $z\in\bC$.
Therefore
\begin{equation}\label{eqfund}
\begin{pmatrix}
\Phi(|N|)& \Phi(N)\\
\Phi(N^*)& \Phi(|N|)
\end{pmatrix} \ge 0.
\end{equation}
Now, let $V^*$ be the unitary part in the polar decomposition $\Phi(N)=V^*|\Phi(N)|$. We have, for all $\alpha>0$,
\begin{equation*}
\begin{pmatrix} \alpha^{-1/2} V & -\alpha^{1/2} I \end{pmatrix}
\begin{pmatrix}
\Phi(|N|)& \Phi(N)\\
\Phi(N^*)& \Phi(|N|)
\end{pmatrix} 
\begin{pmatrix} \alpha^{-1/2} V^*\\ -\alpha^{1/2} I \end{pmatrix}
\ge 0.
\end{equation*}
Equivalently,
$$
   |\Phi(N)| \le \frac{\alpha\Phi(|N|) +\alpha^{-1}V\Phi(|N|)V^*}{2}
$$
and setting $\beta=\alpha/2$ yields 
the inequalities of the lemma.

A map $\Phi:\bM_n\to\bM_m$ induces, by taking compressions to the upper left $2\times 2$ corners, a map from $\bM_2$ to $\bM_2$. Therefore, to check that the constant $1/4\beta$ is optimal when $\beta \ge 1/2$, we may assume $n=m=2$. Thus suppose $\beta\ge 1/2$, consider
$$
A=\begin{pmatrix}
2\beta& 1\\
1& (2\beta)^{-1}
\end{pmatrix}
$$
and the positive linear map on $\bM_2$, $Z\mapsto A\circ Z$, and take the reflexion 
$$
R=\begin{pmatrix}
0& 1\\
1& 0
\end{pmatrix}.
$$
We claim that the inequality, for any unitary $V\in\bM_2$ and any $0<c<1$,
\begin{equation}\label{optimal}
|A\circ R| -\beta (A\circ |R|) \le \frac{c}{4\beta} V (A\circ |R|) V^*
\end{equation}
cannot hold, so that the constant $1/4\beta$ is optimal in our theorem. Indeed, using $\beta \ge 1/2$, we have 
$$
\lambda_1(|A\circ R| -\beta (A\circ |R|) )= \frac{1}{2}, \qquad \lambda_2(|A\circ R| -\beta (A\circ |R|) )=1-2\beta^2,
$$
and
$$
 \frac{c}{4\beta} \lambda_1 (A\circ |R|)= \frac{c}{2}, \qquad  \frac{c}{4\beta} \lambda_2 (A\circ |R|)= \frac{c}{8\beta^2}. 
$$
As \eqref{optimal} entails 
$$
\frac{1}{2}=\lambda_1(|A\circ R| -\beta (A\circ |R|) ) \le \frac{c}{4\beta} \lambda_1 (A\circ |R|)= \frac{c}{2}
$$
we necessarily have $c\ge 1$ in \eqref{optimal}.

To get the double inequality involving the geometric mean $\#$, consider again the block-matrix \eqref{eqfund} and apply a unitary congruence with
\begin{equation*}
\begin{pmatrix}
V& 0\\
0& I
\end{pmatrix}
\end{equation*}
where $V^*$ is still the unitary  factor in the polar decomposition of $\Phi(N)$.
 This shows that 
\begin{equation*}
\begin{pmatrix}
V\Phi(|N|)V^*& |\Phi(N)|\\
|\Phi(N^*)|& \Phi(|N|)
\end{pmatrix} \ge 0.
\end{equation*}
It then follows from the maximal property of $\#$ that
$$
|\Phi(N)| \le V\Phi(|N|)V^*\# \Phi(|N|)
$$
which is exactly the first inequality in the double inequality. The second one is a simple application of the arithmetic-geometric means inequality.
\end{proof}

\vskip 5pt
Several eigenvalue inequalities follow from the theorem combined with some basic relations of Weyl type.

\begin{cor}\label{corfolk}
 Let $\Phi: \bM_n\to \bM_m$ be  a positive linear map, $n,m\neq1$,  and let $N\in \bM_n$ be normal. Then, for all $\beta >0$ and all integers $j,k\ge 1$,
\begin{itemize}
\item[(a)] $|\Phi(N)| \prec_{w\log}  \Phi(|N|)$
\item[(b)]  $\lambda_{j+k-1}(|\Phi(N)|) \le \sqrt{\lambda_{j}(\Phi(|N|))\lambda_{k}(\Phi(|N|))}$
\item[(c)] $4\beta\lambda_{j}\left\{|\Phi(N)|-\beta\Phi(|N|)\right\} \le \lambda_{j}(\Phi(|N|))$.
\end{itemize}
\end{cor}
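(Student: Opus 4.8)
The plan is to feed the two inequalities of Theorem \ref{mainthm} into classical eigenvalue monotonicity principles. Throughout I write $A=\Phi(|N|)$ and $B=V\Phi(|N|)V^*$, so that $\lambda_i(A)=\lambda_i(B)=\lambda_i(\Phi(|N|))$ for every $i$, and I keep in hand the two conclusions $|\Phi(N)|\le A\# B$ and $|\Phi(N)|\le \beta A+\tfrac1{4\beta}B$. Item (c) is then immediate: the additive estimate gives $|\Phi(N)|-\beta\Phi(|N|)\le \tfrac1{4\beta}B$, and Weyl's monotonicity principle ($X\le Y\Rightarrow\lambda_j(X)\le\lambda_j(Y)$) combined with the unitary invariance $\lambda_j(B)=\lambda_j(\Phi(|N|))$ yields $\lambda_j(|\Phi(N)|-\beta\Phi(|N|))\le\tfrac1{4\beta}\lambda_j(\Phi(|N|))$; multiplying by $4\beta$ is exactly (c).

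For (b) I would work from the sharper bound $|\Phi(N)|\le A\#B$ and establish the multiplicative Weyl-type relation $\lambda_{j+k-1}(A\#B)\le\sqrt{\lambda_j(A)\lambda_k(B)}$, the analogue of $\lambda_{j+k-1}(X+Y)\le\lambda_j(X)+\lambda_k(Y)$. The key ingredients are the scaling-invariant operator arithmetic--geometric mean inequality $A\#B=(tA)\#(t^{-1}B)\le\tfrac12(tA+t^{-1}B)$, valid for all $t>0$, and the Courant--Fischer characterization. Indeed, on the intersection $W$ of the spectral subspace of $A$ for eigenvalues $\le\lambda_j(A)$ (codimension $\le j-1$) with that of $B$ for eigenvalues $\le\lambda_k(B)$ (codimension $\le k-1$), which has codimension $\le j+k-2$, the above bound controls the quadratic form: $\langle(A\#B)v,v\rangle\le\tfrac12(t\lambda_j(A)+t^{-1}\lambda_k(B))\|v\|^2$, and optimizing over $t$ gives $\sqrt{\lambda_j(A)\lambda_k(B)}$ on $W$. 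The min--max formula for $\lambda_{j+k-1}$ then delivers the relation, and combining it with $\lambda_{j+k-1}(|\Phi(N)|)\le\lambda_{j+k-1}(A\#B)$ and $\lambda_k(B)=\lambda_k(\Phi(|N|))$ produces (b).

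For (a) the right tool is the log-majorization $\prod_{i=1}^k\lambda_i(A\#B)\le\sqrt{\prod_{i=1}^k\lambda_i(A)\cdot\prod_{i=1}^k\lambda_i(B)}$, which I would obtain through the $k$-th antisymmetric tensor power (compound) $\wedge^k$. Since $\wedge^k$ is multiplicative and commutes with square roots and inverses on positive matrices, it commutes with the geometric mean, $\wedge^k(A\#B)=(\wedge^k A)\#(\wedge^k B)$; together with the elementary $\lambda_{\max}(X\#Y)\le\sqrt{\lambda_{\max}(X)\lambda_{\max}(Y)}$ (apply monotonicity of $\#$ to $X\le\lambda_{\max}(X)I$ and $Y\le\lambda_{\max}(Y)I$) and the identity $\lambda_{\max}(\wedge^k M)=\prod_{i=1}^k\lambda_i(M)$, the log-majorization follows. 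Feeding in $\lambda_i(A)=\lambda_i(B)=\lambda_i(\Phi(|N|))$ collapses the right-hand side to $\prod_{i=1}^k\lambda_i(\Phi(|N|))$, and finally $|\Phi(N)|\le A\#B$ with Weyl monotonicity gives $\prod_{i=1}^k\lambda_i(|\Phi(N)|)\le\prod_{i=1}^k\lambda_i(A\#B)\le\prod_{i=1}^k\lambda_i(\Phi(|N|))$, that is, the weak log-majorization (a).

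The main obstacle is genuinely the two geometric-mean relations; once the compound identity $\wedge^k(A\#B)=(\wedge^k A)\#(\wedge^k B)$ and the subspace/min--max argument are in place (both are standard and recorded in \cite{Bhatia2}), everything else is pure monotonicity. It is worth stressing that (a) does \emph{not} reduce to (b) by naively pairing indices: using (b) with $(j,k)=(1,1)$ and $(1,2)$ only yields $\lambda_1(|\Phi(N)|)\lambda_2(|\Phi(N)|)\le\lambda_1(\Phi(|N|))^{3/2}\lambda_2(\Phi(|N|))^{1/2}$, which exceeds $\lambda_1(\Phi(|N|))\lambda_2(\Phi(|N|))$ whenever $\lambda_1>\lambda_2$. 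This is precisely why (a) must be routed through the tensor-power log-majorization rather than extracted from (b).
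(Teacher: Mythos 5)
Your proof is correct and takes essentially the same route the paper indicates: the paper offers no detailed proof of this corollary, remarking only that (a) and (b) follow easily from the geometric-mean inequality $|\Phi(N)|\le \Phi(|N|)\# V\Phi(|N|)V^*$ of Theorem \ref{mainthm} and that (c) is a restatement of the additive inequality, which is exactly the structure of your argument. The standard facts you supply in detail --- the Weyl-type bound $\lambda_{j+k-1}(A\#B)\le\sqrt{\lambda_j(A)\lambda_k(B)}$ via Courant--Fischer and the compound identity $\wedge^k(A\#B)=(\wedge^k A)\#(\wedge^k B)$ for the log-majorization --- are precisely the details the paper leaves to the reader (it alternatively points to \eqref{eqfund} plus Horn's inequality), and your concluding remark that (a) cannot be obtained by iterating (b) is a correct and worthwhile observation.
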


\vskip 5pt
The weak log-majorisation (a) is well-known, see \cite[Theorem 2.9]{BL1} for a much more general result. (a) and (b) easily follow from the first inequality in the double inequality of the theorem, however (a) and (b) can also be  derived from \eqref{eqfund} and Horn's inequality. (c) is a more subtle estimate, it is a restatement of the first inequality of the theorem.
The next corollary extends Proposition 1.1.

\begin{cor} If A is a normal matrix, then $| A_{\bR}| \prec_{w\log}  |A|_{\bR}$.
\end{cor}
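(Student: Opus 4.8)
The plan is to realise both $A_{\bR}$ and $|A|_{\bR}$ as the images, under one and the same positive linear map, of a normal matrix and of its modulus, and then simply to invoke part~(a) of Corollary~\ref{corfolk}. The guiding observation is that the entrywise real part is an \emph{averaging} operation, $A_{\bR}=\tfrac12(A+\overline{A})$, and that the identical averaging applied to $|A|$ yields $|A|_{\bR}$. So I would place $A$ and $\overline{A}$ as the two diagonal blocks of a single matrix and use a map that averages those blocks.

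Concretely, first I would set $N=A\oplus\overline{A}\in\bM_{2n}$. Entrywise conjugation preserves normality (conjugate the identity $A^*A=AA^*$ and use $\overline{A^*}=(\overline{A})^*$), so $\overline{A}$ is normal and hence $N$ is normal. Next I would introduce $\Phi:\bM_{2n}\to\bM_n$, $\Phi(M)=\tfrac12\left(R_1MR_1^*+R_2MR_2^*\right)$, where $R_1=[\,I_n\ \ 0\,]$ and $R_2=[\,0\ \ I_n\,]$ lie in $\bM_{n,2n}$; this extracts and averages the two diagonal blocks of $M$. Being a sum of two congruences, $\Phi$ is a positive linear map. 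Then $\Phi(N)=\tfrac12(A+\overline{A})=A_{\bR}$. For the modulus I would use $|N|=|A|\oplus|\overline{A}|$ together with $|\overline{A}|=\overline{|A|}$, whence $\Phi(|N|)=\tfrac12\bigl(|A|+\overline{|A|}\bigr)=|A|_{\bR}$. Applying Corollary~\ref{corfolk}(a) to $\Phi$ and $N$ gives $|\Phi(N)|\prec_{w\log}\Phi(|N|)$, which is exactly $|A_{\bR}|\prec_{w\log}|A|_{\bR}$.

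The only genuinely nonroutine point is the identity $|\overline{A}|=\overline{|A|}$, and it rests on the fact that entrywise conjugation commutes with the continuous functional calculus on positive matrices: from $(\overline{A})^*\overline{A}=A^T\overline{A}=\overline{A^*A}$ and $\overline{(A^*A)^{1/2}}=\bigl(\overline{A^*A}\bigr)^{1/2}$ (apply conjugation to a spectral decomposition $A^*A=UDU^*$, noting $\overline{U}$ is unitary) one obtains $|\overline{A}|=\bigl(\overline{A^*A}\bigr)^{1/2}=\overline{|A|}$. I would also record the trivial remark that the case $n=1$ is just the scalar inequality $|\mathrm{Re}\,a|\le|a|$, so one may assume $n\ge2$ to meet the hypothesis $n,m\neq1$ of Corollary~\ref{corfolk}. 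Finally, since $|A|\ge0$ forces $\overline{|A|}\ge0$ and hence $|A|_{\bR}\ge0$, the right-hand side is positive semidefinite, so the weak log-majorisation is meaningful; taking $k=n$ in it recovers $|\det A_{\bR}|\le\det|A|_{\bR}$, i.e. Proposition~1.1.
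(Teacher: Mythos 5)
Your proof is correct and is essentially the paper's own argument: the same block-diagonal normal matrix $N=A\oplus\overline{A}$, the same positive linear map averaging the two diagonal blocks, the same key identity $|\overline{A}|=\overline{|A|}$ (which the paper proves via real polynomials rather than via a spectral decomposition, an immaterial difference), and the same appeal to Corollary~2.3(a). Your additional remarks (writing the map as a sum of congruences to certify positivity, handling $n=1$, recovering Proposition~1.1) are sound but not a departure from the paper's route.
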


\begin{proof} Observe that, for any polynomial $p(t)$ with only real coefficients and any
$T \in\bM_n$,
$$
p(\overline{T}^*\overline{T}) = \overline{p(T ^*T )}
$$
and so $|\overline{T}| = \overline{|T |}$. Now, consider the normal operator
$$
N=\begin{pmatrix} A&0 \\ 0&\overline{A}\end{pmatrix}.
$$
By the previous observation,
$$
| N|=\begin{pmatrix} |A|&0 \\ 0&\overline{|A|}\end{pmatrix},
$$
and applying Corollary 2.3 to $N$ with the positive linear map,
$$
\begin{pmatrix} X&R \\ L&Y\end{pmatrix}\mapsto \frac{X+Y}{2},
$$
yields  $| A_{\bR}| \prec_{w\log}  |A|_{\bR}$.
\end{proof}

Some quite useful positive linear maps are  the partial traces on tensor products. Identifying $ \bM_{d}\otimes \bM_{n}$ with $\bM_{d}(\bM_n)$, the partial trace for the first factor 
 ${\mathrm{Tr}}_{\bf{1}}:  \bM_{d}\otimes \bM_{n}\to \bM_n$ is defined as
$$
{\mathrm{Tr}}_{\bf{1}}\left(
\begin{pmatrix}
A_{1,1} &\cdots& A_{1,d} \\
\vdots & \ddots &\vdots \\
A_{d,1}& \cdots & A_{d,d}
\end{pmatrix}\right)
=\sum_{i=1}^d A_{i,i}.
$$

\vskip 5pt
\begin{cor} Let $N\in \bM_{d}\otimes \bM_{n}$ be normal. Then, for some unitary $V\in\bM_n$,
$$
|{\mathrm{Tr}}_{\bf{1}}(N)| \le {\mathrm{Tr}}_{\bf{1}}(|N|)\# V{\mathrm{Tr}}_{\bf{1}}(|N|)V^*
$$
\end{cor}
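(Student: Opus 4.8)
The plan is to read this corollary off Theorem~\ref{mainthm}: the partial trace is a positive linear map, so the inequality is nothing but the first (geometric-mean) inequality of the double inequality in the theorem, specialized to $\Phi={\mathrm{Tr}}_{\mathbf{1}}$. All the analytic content already resides in the theorem, and the only point requiring an argument is the positivity of ${\mathrm{Tr}}_{\mathbf{1}}$.

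First I would establish that ${\mathrm{Tr}}_{\mathbf{1}}:\bM_d\otimes\bM_n\to\bM_n$ is a positive linear map. Linearity is immediate from the defining formula ${\mathrm{Tr}}_{\mathbf{1}}((A_{i,j}))=\sum_{i=1}^d A_{i,i}$. For positivity, I would exhibit ${\mathrm{Tr}}_{\mathbf{1}}$ as a sum of congruences, exactly in the spirit of the representation used in the proof of the theorem: writing $R_i=e_i^*\otimes I_n\in\bM_{n,dn}$ for the standard basis vectors $e_1,\dots,e_d$ of $\bC^d$, one has ${\mathrm{Tr}}_{\mathbf{1}}(M)=\sum_{i=1}^d R_i M R_i^*$ under the identification $\bM_d\otimes\bM_n\cong\bM_{dn}$. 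Each $R_i M R_i^*$ is positive whenever $M\ge0$, and a sum of positive matrices is positive; equivalently, each diagonal block $A_{i,i}$ is a compression of the positive matrix $M$, hence positive, and one sums. This shows ${\mathrm{Tr}}_{\mathbf{1}}$ is a positive linear map from $\bM_{dn}$ to $\bM_n$.

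With positivity in hand, I would simply apply Theorem~\ref{mainthm} to $\Phi={\mathrm{Tr}}_{\mathbf{1}}$ and the given normal $N\in\bM_d\otimes\bM_n$: the first inequality of the displayed double inequality furnishes a unitary $V\in\bM_n$ with $|{\mathrm{Tr}}_{\mathbf{1}}(N)|\le{\mathrm{Tr}}_{\mathbf{1}}(|N|)\#V{\mathrm{Tr}}_{\mathbf{1}}(|N|)V^*$, which is precisely the claim.

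I do not expect a real obstacle, since the heavy lifting is done in the theorem; the only thing to watch is the dimension bookkeeping, as the theorem is stated for source and target dimensions different from $1$. The source $\bM_{dn}$ is at least $2$-dimensional except in the trivial case $d=n=1$, and in the degenerate case $n=1$ (where the target is $\bC$) the asserted inequality collapses to the scalar triangle inequality $|{\mathrm{Tr}}_{\mathbf{1}}(N)|\le{\mathrm{Tr}}_{\mathbf{1}}(|N|)$ since $a\#a=a$; so one may simply restrict attention to $n\ge2$ without loss.
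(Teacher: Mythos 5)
Your proposal is correct and is exactly the argument the paper intends: the corollary is stated as an immediate consequence of Theorem~\ref{mainthm}, obtained by noting that ${\mathrm{Tr}}_{\bf{1}}$ is a positive linear map (your sum-of-congruences representation ${\mathrm{Tr}}_{\bf{1}}(M)=\sum_i R_i M R_i^*$ is the standard justification) and invoking the geometric-mean half of the double inequality. Your extra care with the degenerate case $n=1$ is a harmless refinement of the same approach.
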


Applying this corollary to a block-diagonal normal matrix, $N=N_1\oplus\cdots \oplus N_d$, we obtain the inequality: if $N_1,\ldots, N_d$ are normal matrices in $\bM_n$, then for some unitary $V\in\bM_n$,
$$
 \left|\sum_{i=1}^d N_i\right| \le  \left\{\sum_{i=1}^d|N_i|\right\} \#\left\{ V\left(\sum_{i=1}^d|N_i|\right)V^*\right\}.
$$
and, consequently,
$$
\left|\sum_{i=1}^d N_i\right| \le \frac{1}{2}\left\{ \sum_{i=1}^d|N_i| +V\left(\sum_{i=1}^d|N_i|\right)V^*\right\}.
$$
Though the geometric mean inequality is stronger than the arithmetic mean one, the second one may be more natural and useful. For instance, combining with some well-known eigenvalue inequalities for convex functions  we may infer from the previous inequality \cite[Proposition 2.11]{BL0}.

Let us mention that Theorem \ref{mainthm} is optimal: in general, $|\Phi(N)|$ cannot be bounded by a single element into the unitary orbit of $\Phi(|N|)$. In fact, for any $c>0$, there is no general inequality of the form 
\begin{equation}\label{eqc}
|\Phi(N)| \le c V \Phi(|N|)V^*
\end{equation}
for some unitary $V$.
 To check that, take $\Phi$ as the partial trace of $\bM_2(\bM_2)$ and set 
$$
N=  \begin{pmatrix}  P & 0 \\ 0& - Q\end{pmatrix} 
$$
where $P,Q\in\bM_2$ are the projections
$$
P=  \begin{pmatrix}  1 & 0 \\ 0& 0\end{pmatrix}, \qquad
Q=
\begin{pmatrix}  \cos^2 a & \sin a\cos a \\ \sin a\cos a& \sin^2 a \end{pmatrix}.
$$
Since $$
\lim_{a\to 0^+} \frac{\lambda_2(\Phi(|N|))}{\lambda_2(|\Phi(N)|)}=\lim_{a\to 0^+} \frac{\lambda_2(P+Q)}{\lambda_2(|P-Q|)} = \lim_{a\to 0^+}  \frac{1-\cos a}{\sin a}= 0
$$
we infer that \eqref{eqc} cannot hold.

\section{Some special cases}

The Russo-Dye theorem \cite{RD} says that, given a positive linear map $\Phi$ on a unital $C^*$-algebra ${\mathcal{A}}$, we have $\| \Phi(Z)\|\le \|\Phi(I)\|$ for all contractions $Z\in{\mathcal{A}}$. We can say much more as stated in the next corollary. We stay in the setting of $\bM_n$, but the same result holds for an arbitrary  unital $C^*$-algebra ${\mathcal{A}}$ acting on a separable Hilbert space ${\mathcal{H}}$, by considering partial isometries  instead of unitaries in $\bM_m$.

\vskip 5pt
\begin{cor}\label{corcontraction}  Let $Z\in \bM_n$ be a contraction and  let $\Phi: \bM_n\to \bM_m$ be  a positive linear map. Then, for some unitary $V\in\bM_m$,
$$
   |\Phi(Z)| \le \frac{\Phi(I) +V\Phi(I)V^*}{2}.
$$
\end{cor}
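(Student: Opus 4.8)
The plan is to reduce the statement to Proposition 2.1, applied not to $Z$ itself (which need not be normal) but to a \emph{unitary} matrix sitting in a larger algebra. The single structural fact I will exploit is the classical observation that every contraction in $\bM_n$ is the arithmetic mean of two unitaries. Concretely, from a singular value decomposition $Z=X\Sigma Y^*$ with $X,Y$ unitary and $\Sigma=\diag(s_1,\ldots,s_n)$, $0\le s_i\le 1$, I write $s_i=\cos\theta_i$ and put $D=\diag(e^{i\theta_1},\ldots,e^{i\theta_n})$, so that $\Sigma=(D+D^*)/2$ and hence $Z=(U_1+U_2)/2$ with $U_1=XDY^*$ and $U_2=XD^*Y^*$ both unitary.

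Next I transfer the problem to $\bM_{2n}$. Set $N=U_1\oplus U_2$; this is unitary, hence normal, with $|N|=I_{2n}$. Define $\Psi:\bM_{2n}\to\bM_m$ by $\Psi\left(\begin{smallmatrix} A&B\\ C&D\end{smallmatrix}\right)=\tfrac12\bigl(\Phi(A)+\Phi(D)\bigr)$. Since this is the composition of $\Phi$ with the normalized partial trace, both of which are positive linear maps, $\Psi$ is itself a positive linear map. The point of this construction is that it has been arranged so that $\Psi(N)=\tfrac12(\Phi(U_1)+\Phi(U_2))=\Phi(Z)$ while $\Psi(|N|)=\Phi(I)$. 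Applying Proposition 2.1 to the normal matrix $N$ and the map $\Psi$ then yields a unitary $V\in\bM_m$ with
$$
|\Psi(N)|\le \frac{\Psi(|N|)+V\Psi(|N|)V^*}{2},
$$
which is precisely the desired inequality $|\Phi(Z)|\le \bigl(\Phi(I)+V\Phi(I)V^*\bigr)/2$.

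The step that must be handled with care — and the one where a naive argument fails — is the averaging. One is tempted to write $\Phi(Z)=\tfrac12(\Phi(U_1)+\Phi(U_2))$ and bound $|\Phi(Z)|$ by $\tfrac12(|\Phi(U_1)|+|\Phi(U_2)|)$ using each unitary separately; but the triangle inequality $|S+T|\le|S|+|T|$ is false for the matrix modulus, so this does not lead anywhere, and in any case it would produce two unitaries rather than the single $V$ demanded. Encoding the average \emph{inside} one positive map $\Psi$ on $\bM_{2n}$ and only afterwards invoking the normal-matrix result is exactly what circumvents this difficulty. The passage to a general unital $C^*$-algebra acting on a separable Hilbert space proceeds along the same lines, with partial isometries playing the role of the two unitaries, as indicated in the remark preceding the statement.
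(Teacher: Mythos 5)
Your proof is correct, but it takes a genuinely different route from the paper. The paper dilates $Z$ to the Halmos unitary
$U=\begin{pmatrix} Z&-\sqrt{I-ZZ^*} \\ \sqrt{I-Z^*Z}& Z^*\end{pmatrix}\in\bM_2(\bM_n)$
and applies the main theorem with the corner map $\Psi\left(\begin{smallmatrix} A&B\\ C&D\end{smallmatrix}\right)=\Phi(A)$, whereas you decompose $Z$ as a mean of two unitaries via the singular value decomposition, form the block-diagonal unitary $N=U_1\oplus U_2$, and use the averaged diagonal map $\Psi\left(\begin{smallmatrix} A&B\\ C&D\end{smallmatrix}\right)=\tfrac12(\Phi(A)+\Phi(D))$. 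Both arguments are instances of the same template --- manufacture a unitary $N\in\bM_{2n}$ and a positive map $\Psi$ with $\Psi(N)=\Phi(Z)$ and $\Psi(|N|)=\Phi(I)$, then invoke Proposition 2.1 --- and your verification of each ingredient (the SVD trick $s_i=\cos\theta_i$, positivity of $\Psi$, the two evaluations) is sound; your remark that the naive triangle-inequality argument fails is also well taken. The one place where the two routes genuinely diverge in strength is your closing sentence: the mean-of-two-unitaries decomposition is \emph{not} available in a general unital $C^*$-algebra (a non-unitary isometry, such as the unilateral shift, is an extreme point of the unit ball and hence is not a mean of two unitaries), so your construction does not transfer beyond matrices (or invertible contractions). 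The Halmos dilation, by contrast, exists in $\bM_2({\mathcal A})$ for any unital $C^*$-algebra ${\mathcal A}$ via the continuous functional calculus, which is why the paper's remark about the general case only needs to replace the unitary $V$ in the conclusion by a partial isometry; this robustness is what the paper's choice of dilation buys. For the corollary as actually stated, over $\bM_n$, your proof is complete.
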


\vskip 5pt
\begin{proof} We may dilate $Z$ into a unitary $U\in\bM_2(\bM_n)$, for instance with Halmos,
$$
U=\begin{pmatrix}
Z&-\sqrt{I-ZZ^*} \\
\sqrt{I-Z^*Z}& Z^*
 \end{pmatrix}
$$
Now, let $\Psi:\bM_2(\bM_n)\to\bM_m$ be defined as
$$
\Psi\left( \begin{pmatrix} A&B \\ C&D\end{pmatrix}\right)=\Phi(A).
$$
Applying Theorem \ref{mainthm} to $\Psi$ and $U$, we have
$$
|\Phi(Z)|=|\Psi(U)|\le \frac{\Psi(|U|) +V\Psi(|U|)V^*}{2}= \frac{\Phi(I) +V\Phi(I)V^*}{2}
$$
for some unitary $V\in\bM_m$.
\end{proof}

Arguing as in the previous proof, and using Corollary \ref{corfolk} (a) we have the following log-majorisation:
\vskip 5pt
\begin{cor}\label{maincor2}  Let $Z\in \bM_n$ be a contraction and  let $\Phi: \bM_n\to \bM_m$ be  a positive linear map. Then
$$
   |\Phi(Z)| \prec_{w\!\log}  \Phi(I).
$$
\end{cor}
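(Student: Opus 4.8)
The plan is to reproduce the dilation argument already used for Corollary \ref{corcontraction}, substituting the weak log-majorisation of Corollary \ref{corfolk}(a) for the arithmetic-mean bound of Theorem \ref{mainthm} at the final step. The key observation is that the normal matrix produced by the Halmos dilation is in fact \emph{unitary}, so its modulus is the identity and the right-hand side of the log-majorisation collapses to $\Phi(I)$.

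Concretely, I would first dilate the contraction $Z$ to a unitary
$$
U=\begin{pmatrix}
Z&-\sqrt{I-ZZ^*} \\
\sqrt{I-Z^*Z}& Z^*
 \end{pmatrix}\in\bM_2(\bM_n),
$$
exactly as in the preceding proof. A unitary is normal, so Corollary \ref{corfolk}(a) is applicable to $U$. Next I would reintroduce the same corner map
$$
\Psi\left( \begin{pmatrix} A&B \\ C&D\end{pmatrix}\right)=\Phi(A),
$$
which is positive and linear since extraction of the upper-left block is a positive linear map and $\Phi$ is positive. Applying Corollary \ref{corfolk}(a) to $\Psi$ and the normal matrix $U$ gives $|\Psi(U)|\prec_{w\log}\Psi(|U|)$, and it then remains only to identify both sides. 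Because $U$ is unitary, $|U|=I$, whence $\Psi(|U|)=\Psi(I)=\Phi(I)$; meanwhile $\Psi(U)=\Phi(Z)$ is immediate from the definition of $\Psi$. Substituting these yields $|\Phi(Z)|\prec_{w\log}\Phi(I)$, as claimed.

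Since every ingredient is already in place, I do not expect a genuine obstacle here: the statement is essentially a packaging of Corollary \ref{corfolk}(a) through the dilation. The only points needing a line of justification are that $\Psi$ is positive, so that the corollary applies to it, and the two identifications $|U|=I$ and $\Psi(U)=\Phi(Z)$, all of which are routine. The one conceptual subtlety worth flagging is that the strength of the conclusion rests on $Z$ being a contraction, for only then does the Halmos dilation produce a genuine unitary with $|U|=I$, collapsing the bound to $\Phi(I)$ rather than to $\Phi(|U|)$.
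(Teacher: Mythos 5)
Your proposal is correct and is precisely the argument the paper intends: the paper's proof consists of the single remark "arguing as in the previous proof, and using Corollary \ref{corfolk} (a)," which is exactly your combination of the Halmos dilation, the corner map $\Psi$, and the identifications $|U|=I$, $\Psi(U)=\Phi(Z)$. Nothing further is needed.
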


\vskip 5pt
By using a block diagonal matrix $Z=Z_1\oplus \cdots \oplus Z_k$ and an obvious positive linear map,  Corollary \ref{maincor2} reads as a folklore result:

\vskip 5pt
\begin{cor}\label{corsurp} Let $X_1,\ldots, X_k$ be in $\bM_{m,n}$ and let $Z_1,\ldots, Z_k$  be contractions in $\bM_n$. Then,  
$$
\left|\sum_{i=1}^k X^*_iZ_iX_i\right| \prec_{w\!\log}\sum_{i=1}^k X^*_iX_i.
$$
\end{cor}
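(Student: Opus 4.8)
The plan is to deduce this from Corollary \ref{maincor2} by encoding the whole family $(X_i,Z_i)$ as a single contraction acted upon by a single positive linear map, exactly as the surrounding discussion suggests. First I would set
$$
Z = Z_1 \oplus \cdots \oplus Z_k \in \bM_{kn},
$$
which is again a contraction because $\|Z\| = \max_{1\le i\le k}\|Z_i\| \le 1$.

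Next I would define a map $\Phi\colon \bM_{kn}\to\bM_m$ by writing a matrix $W\in\bM_{kn}=\bM_k(\bM_n)$ in blocks $W=[W_{ij}]_{i,j=1}^k$ with $W_{ij}\in\bM_n$ and putting
$$
\Phi(W)=\sum_{i=1}^k X_i^* W_{ii}X_i.
$$
This map is positive: each summand $W\mapsto X_i^*W_{ii}X_i$ is the composition of the compression $W\mapsto W_{ii}$, which is a congruence $W\mapsto E_i^*WE_i$ with $E_i$ the isometric embedding of the $i$-th diagonal block, with the congruence $A\mapsto X_i^*AX_i$; since each factor is positive and a sum of positive maps is positive, $\Phi$ is positive. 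The choices are tailored so that the two sides of the claimed inequality appear directly: the diagonal blocks of $Z$ are exactly the $Z_i$ and the diagonal blocks of the identity are the $I_n$, so that
$$
\Phi(Z)=\sum_{i=1}^k X_i^*Z_iX_i,\qquad \Phi(I)=\sum_{i=1}^k X_i^*X_i.
$$

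It then remains only to invoke Corollary \ref{maincor2} for the contraction $Z$ and the positive linear map $\Phi$, which gives $|\Phi(Z)|\prec_{w\log}\Phi(I)$, that is, the asserted weak log-majorisation. I do not expect any genuine obstacle here: the argument is essentially bookkeeping, organizing the data into block-diagonal form, and the one point deserving a moment's attention, namely the positivity of $\Phi$, is settled at once by its expression as a sum of congruences.
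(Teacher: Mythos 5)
Your proof is correct and is exactly the argument the paper intends: the paper's one-line justification is precisely to take the block-diagonal contraction $Z=Z_1\oplus\cdots\oplus Z_k$ and "an obvious positive linear map," which is the map $\Phi(W)=\sum_i X_i^*W_{ii}X_i$ you wrote out, and then apply Corollary \ref{maincor2}. You have merely made explicit the bookkeeping (positivity of $\Phi$ as a sum of congruences composed with block compressions) that the paper leaves to the reader.
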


\vskip 5pt
In fact, Corollary \ref{corsurp} is rather trivial for an expert since the positivity 
$$
\begin{pmatrix} \sum_{i=1}^k X^*_i|Z_i^*|X_i& \sum_{i=1}^k X^*_iZ_iX_i \\ \sum_{i=1}^k X^*_iZ_i^*X_i & \sum_{i=1}^k X^*_i|Z_i|X_i\end{pmatrix}\ge 0
$$
entails the factorisation
$$
\sum_{i=1}^k X^*_iZ_iX_i= \left( \sum_{i=1}^k X^*_i|Z_i^*|X_i\right)^{1/2} K \left(\sum_{i=1}^k X^*_i|Z_i|X_i \right)^{1/2}
$$
for some contraction $K\in\bM_n$, so that Corollary \ref{corsurp} follows from Horn's inequalities.

\vskip 5pt
A special case of Corollary \ref{corcontraction} stresses on the role of  diagonals  for the Schur product with a positive matrix.

\vskip 5pt
\begin{cor}\label{corschur1}  Let $A\in\bM_n^+$ and let $Z\in \bM_n$ be a contraction.  If $D$ denotes the diagonal part of $A$, then for some unitary $V\in\bM_n$,
$$
   |A\circ Z|  \le \frac{D+VDV^*}{2}
$$
\end{cor}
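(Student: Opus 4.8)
The plan is to specialize Corollary \ref{corcontraction} to the Schur product map, so that the statement becomes essentially a direct reading of that corollary. First I would set $\Phi\colon\bM_n\to\bM_n$ to be the map $\Phi(X)=A\circ X$. This is a positive linear map by Schur's theorem, precisely the basic example recalled in the introduction: the Schur product with a fixed $A\in\bM_n^+$ preserves the positive cone. Thus $\Phi$ satisfies the hypotheses of Corollary \ref{corcontraction} with $m=n$.

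Next I would identify $\Phi(I)$. Since the Schur product acts entrywise, the $(i,j)$ entry of $A\circ I$ is $A_{i,j}\,\delta_{i,j}$, so the off-diagonal entries of $A$ are annihilated and the diagonal entries are retained. Hence $\Phi(I)=A\circ I=D$, the diagonal part of $A$. Likewise, by definition $\Phi(Z)=A\circ Z$, so that $|\Phi(Z)|=|A\circ Z|$.

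It then remains only to apply Corollary \ref{corcontraction} to the contraction $Z$ and the map $\Phi$. This produces a unitary $V\in\bM_n$ with
$$
|A\circ Z|=|\Phi(Z)|\le \frac{\Phi(I)+V\Phi(I)V^*}{2}=\frac{D+VDV^*}{2},
$$
which is exactly the claimed inequality. There is essentially no obstacle here: the entire content is the observation that the Schur multiplier $X\mapsto A\circ X$ is positive and sends the identity to the diagonal part $D$, after which the corollary applies verbatim. The only point meriting a word of care is the innocuous bookkeeping that the target and source dimensions coincide ($m=n$), so that $V$ lives in $\bM_n$ as stated.
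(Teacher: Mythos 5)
Your proof is correct and is exactly the paper's intended argument: the paper presents this corollary as an immediate special case of Corollary \ref{corcontraction} applied to the positive linear map $X\mapsto A\circ X$, whose value at the identity is the diagonal part $D$. Nothing is missing.
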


\vskip 5pt
Of course, in Corollary \ref{corcontraction} and Corollary  \ref{corschur1} we may use the geometric mean instead of the arithmetic mean.

Another application to Schur product may be recorded.

\vskip 5pt
\begin{cor}\label{corschurnormal} If $A,B\in\bM_n$ are normal,  then, for some unitary $V\in\bM_n$,
\begin{equation*}
|A\circ B| \le |A|\circ|B|+ \frac{1}{4}V(|A|\circ|B|)V^*
\end{equation*}
where the constant $1/4$ is optimal.
\end{cor}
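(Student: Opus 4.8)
The plan is to realize the Schur product of $A$ and $B$ as the image of the tensor product $A\otimes B$ under a single positive linear map, and then to feed this into Theorem \ref{mainthm} with $\beta=1$. Concretely, let $\{e_1,\dots,e_n\}$ be the canonical basis of $\bC^n$ and let $J:\bC^n\to\bC^n\otimes\bC^n$ be the isometry determined by $Je_i=e_i\otimes e_i$. Define $\Phi:\bM_{n^2}\to\bM_n$ by $\Phi(Z)=J^*ZJ$. This is a positive (indeed completely positive) linear map, and a short entrywise computation shows that $\Phi(X\otimes Y)=X\circ Y$ for all $X,Y\in\bM_n$; in particular $\Phi(A\otimes B)=A\circ B$.

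First I would record two elementary facts. Since $A$ and $B$ are normal, $A\otimes B$ is normal, because $(A\otimes B)^*(A\otimes B)=A^*A\otimes B^*B=AA^*\otimes BB^*=(A\otimes B)(A\otimes B)^*$. Moreover, for arbitrary matrices one has $|A\otimes B|=|A|\otimes|B|$, since both squares equal $A^*A\otimes B^*B$; hence $\Phi(|A\otimes B|)=\Phi(|A|\otimes|B|)=|A|\circ|B|$. Applying Theorem \ref{mainthm} to the normal matrix $N=A\otimes B$ and the positive linear map $\Phi$, with $\beta=1$, yields a unitary $V\in\bM_n$ with
$$
|A\circ B|=|\Phi(A\otimes B)|\le \Phi(|A\otimes B|)+\tfrac14 V\Phi(|A\otimes B|)V^*=|A|\circ|B|+\tfrac14 V(|A|\circ|B|)V^*,
$$
which is exactly the asserted inequality (using instead the first line of the double inequality, one even gets the sharper geometric-mean form).

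It remains to show that $1/4$ cannot be replaced by any smaller constant. Here I would exhibit a concrete $2\times2$ pair, taking
$$
A=\begin{pmatrix} 2 & 1\\ 1 & 1/2\end{pmatrix},\qquad B=\begin{pmatrix} 0 & 1\\ 1 & 0\end{pmatrix},
$$
both of which are Hermitian, hence normal; note $A\ge0$ so $|A|=A$, while $B$ is a reflexion so $|B|=I$. Since $A\circ B=B$ we get $|A\circ B|=I$, whereas $|A|\circ|B|=A\circ I=\diag(2,1/2)=:D$. If the inequality $|A\circ B|\le |A|\circ|B|+cV(|A|\circ|B|)V^*$ held for some unitary $V$ and some $c>0$, then rearranging gives $I-D\le cVDV^*$, and taking the largest eigenvalue of both sides (which is monotone and unitarily invariant) forces $\lambda_1(I-D)\le c\,\lambda_1(D)$, that is $1/2\le 2c$, so $c\ge1/4$.

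The whole argument is essentially bookkeeping once the compression representation of the Schur product is in place, so I do not anticipate a serious obstacle. The two points requiring genuine care are the identity $\Phi(X\otimes Y)=X\circ Y$ (a routine matrix-entry check for the isometry $J$) and the optimality computation, which is nothing but the $\beta=1$ instance of the sharpness example already employed in the proof of Theorem \ref{mainthm}.
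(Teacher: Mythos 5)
Your proof is correct and takes essentially the same route as the paper: realize $A\circ B$ as a compression (principal submatrix) of the normal matrix $A\otimes B$, apply Theorem \ref{mainthm}, and obtain sharpness from the paper's example \eqref{optimal} specialized to $\beta=1$ (your $2\times 2$ pair is exactly that example, with the eigenvalue comparison written out). One small point in your favor: the correct parameter is indeed $\beta=1$ as you use, matching the stated coefficients $1$ and $1/4$; the paper's invocation of ``$\beta=1/4$'' is a slip, since that choice would give the swapped coefficients $1/4$ and $1$.
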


\vskip 5pt
\begin{proof} Note that $A\circ B= \Phi(A\otimes B)$ for some positive linear map (the extraction of a principal submatrix). As $A\otimes B$ is normal and $|A\otimes B|=|A|\otimes |B|$, the inequality follows from Theorem \ref{mainthm} with $\beta=1/4$. The sharpness of $1/4$  is established in  \eqref{optimal}.
\end{proof}

\vskip 5pt
The next consequence of our theorem is of a rather general nature, and we state it with the geometric mean.

\vskip 5pt
\begin{cor}\label{corfinal}     Let $\Phi: \bM_n\to \bM_m$ be  a positive linear map  and let $X\in \bM_n$. Then, for some unitary matrices $U,V\in\bM_m$,
$$
   |\Phi(X+X^*)| \le \Phi(|X|+|X^*|) \#U\Phi(|X|+|X^*|)U^*
$$
and
$$
 |\Phi(X\circ X^*)| \le \Phi(|X|\circ|X^*|) \#V\Phi(|X|\circ|X^*|)V^*.
$$
\end{cor}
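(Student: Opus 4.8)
The plan is to obtain each of the two inequalities as a direct output of the geometric‑mean form of Theorem \ref{mainthm}, applied not to $\Phi$ itself but to a positive linear map built from $\Phi$ and evaluated on a suitable normal matrix. In both cases the underlying normal matrix will be the Hermitian dilation
$$
N=\begin{pmatrix} 0 & X \\ X^* & 0 \end{pmatrix},
$$
whose square is $\diag(XX^*,X^*X)$, so that $|N|=\diag(|X^*|,|X|)$. The strategy in each case is to find a positive linear map sending $N$ (resp.\ a tensor power of $N$) to the matrix inside $|\Phi(\cdot)|$ and sending $|N|$ (resp.\ the corresponding tensor power of $|N|$) to the matrix on the right; then the first inequality of the double inequality in Theorem \ref{mainthm} yields the claim verbatim.

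For the first inequality I would introduce $\Psi:\bM_2(\bM_n)\to\bM_m$ defined by $\Psi\left(\begin{smallmatrix} A & B \\ C & D\end{smallmatrix}\right)=\Phi(A+B+C+D)$. Its positivity is immediate, since a positive $2\times2$ block matrix satisfies $A+B+C+D=\begin{pmatrix} I & I\end{pmatrix}\left(\begin{smallmatrix} A&B\\C&D\end{smallmatrix}\right)\begin{pmatrix} I \\ I\end{pmatrix}\ge 0$ and $\Phi$ preserves positivity. Evaluating on the normal matrix $N$ gives $\Psi(N)=\Phi(X+X^*)$ and $\Psi(|N|)=\Phi(|X|+|X^*|)$, so feeding $\Psi$ and $N$ into Theorem \ref{mainthm} produces the first assertion with some unitary $U$.

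The second inequality is the delicate one. Here the auxiliary normal matrix will be $N\otimes N$, which is Hermitian with $|N\otimes N|=|N|\otimes|N|$. I must manufacture a positive linear map $\Lambda$ with $\Lambda(N\otimes N)=X\circ X^*$ and, at the same time, $\Lambda(|N|\otimes|N|)=|X|\circ|X^*|$. The hard part, and the main obstacle, is precisely that $N$ is block‑off‑diagonal while $|N|$ is block‑diagonal, so the usual Hadamard compression $e_i\mapsto e_i\otimes e_i$ that realises a Schur product would detect one of the two matrices and annihilate the other. I would resolve this by symmetrising over the factor swap: writing $u_{a,i}$ ($a\in\{1,2\}$, $1\le i\le n$) for the standard basis of $\bC^{2n}=\bC^2\otimes\bC^n$, set
$$
w_i=\tfrac{1}{\sqrt2}\bigl(u_{1,i}\otimes u_{2,i}+u_{2,i}\otimes u_{1,i}\bigr)\in\bC^{2n}\otimes\bC^{2n},
$$
an orthonormal family, and let $\Lambda(Z)=W^*ZW$ where $W$ has columns $w_i$; being a congruence, $\Lambda$ is automatically positive. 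A short entrywise computation shows that on $N\otimes N$ the two surviving cross terms pair the $(1,2)$ block with the $(2,1)$ block and give $\langle w_i,(N\otimes N)w_j\rangle=X_{ij}\overline{X_{ji}}=(X\circ X^*)_{ij}$, whereas on $|N|\otimes|N|$ the two surviving terms pair the two diagonal blocks and give $\langle w_i,(|N|\otimes|N|)w_j\rangle=|X|_{ij}|X^*|_{ij}=(|X|\circ|X^*|)_{ij}$.

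With $\Lambda$ in hand, I would apply the geometric‑mean inequality of Theorem \ref{mainthm} to the positive linear map $\Phi\circ\Lambda$ and the normal matrix $N\otimes N$, which gives
$$
|\Phi(X\circ X^*)|\le\Phi(|X|\circ|X^*|)\,\#\,V\,\Phi(|X|\circ|X^*|)V^*
$$
for some unitary $V$, as desired. Thus the whole difficulty is concentrated in recognising that symmetrising over the tensor‑swap is exactly what allows a single positive map to read off the cross‑block product from $N\otimes N$ and the diagonal‑block product from $|N|\otimes|N|$; everything else is bookkeeping. A minor point left to check is the hypothesis $n,m\neq1$ of Theorem \ref{mainthm}: the enlarged domain dimensions $2n$ and $4n^2$ are harmless, and a degenerate target $m=1$ is handled by an obvious embedding.
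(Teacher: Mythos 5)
Your proposal is correct and is essentially the paper's own argument: the first inequality is obtained with exactly the same map $\Psi$ and the same Hermitian dilation, and the second differs only in packaging. For the second inequality the paper applies Theorem \ref{mainthm} to $A\otimes B$ with $A=\begin{pmatrix}0&X^*\\X&0\end{pmatrix}$, $B=\begin{pmatrix}0&X\\X^*&0\end{pmatrix}$, composed with the Schur-extraction map $\Gamma$ and the block-sum map $\Lambda$; since $A=JBJ$ for the flip $J=\begin{pmatrix}0&I\\I&0\end{pmatrix}$, one has $A\otimes B=(J\otimes I)(B\otimes B)(J\otimes I)$, and conjugating the columns $u_{1,i}\otimes u_{1,i}+u_{2,i}\otimes u_{2,i}$ of the paper's composite congruence by $J\otimes I$ gives exactly $\sqrt{2}\,w_i$, so your swap-symmetrized isometry realizes the paper's positive map up to an irrelevant factor of $2$.
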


\vskip 5pt
\begin{proof} Let $\Psi:\bM_2(\bM_n)\to\bM_m$ be defined as
$$
\Psi\left( \begin{pmatrix} A&B \\ C&D\end{pmatrix}\right)=\Phi(A+B+C+D).
$$
Since
$$
A+B+C+D=\begin{pmatrix} I&I\end{pmatrix}\begin{pmatrix} A&B \\ C&D\end{pmatrix}\begin{pmatrix} I\\ I\end{pmatrix},
$$
 $\Psi$ is a positive map. Applying Theorem  \ref{mainthm}   to this map with the normal (Hermitian) operator in $\bM_2(\bM_n)$
$$
\begin{pmatrix} 0&X \\ X^*&0\end{pmatrix}
$$
yields the first inequality. 

To get the second one,  use again that $A\circ B= \Gamma(A\otimes B)$ for some positive linear map $\Gamma$ and pick for $A$ and $B$
 the pair of Hermitian operators in $\bM_{2n}$,
$$
A=\begin{pmatrix} 0&X^* \\ X&0\end{pmatrix}, \quad B=\begin{pmatrix} 0&X \\ X^*&0\end{pmatrix}.
$$
Apply Theorem  \ref{mainthm} to the normal matrix $A\otimes B$ and the positive linear map $\Phi=\Lambda \circ \Gamma$,  where $\Lambda$ is the map on $\bM_{2n}=\bM_2(\bM_n)$ defined as
$$
\Lambda\left(\begin{pmatrix} S&T \\ Q&R\end{pmatrix}\right)= S+T+Q+R.
$$
This yields the second inequality of the Theorem.
\end{proof}

Our last illustration of our main result deals with a special class of positive linear maps whose definition will be recalled at the beginning of the proof.

\vskip 5pt
\begin{cor}\label{lastcor}     Let $\Phi: \bM_n\to \bM_m$ be  a unital two-positive linear map and let $Z\in\bM_n$ be a contraction. Then, 
$$
   |\Phi(Z)| \le  \frac{1}{4}I +\Phi(|Z|).
$$
If $n\ge 3$ and $m\ge 2$, then  the constant $1/4$ is sharp, even if we confine $\Phi$ to the class of unital completely positive maps and  $Z$ to the  set of Hermitian contractions.
\end{cor}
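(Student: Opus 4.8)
The plan is to combine the Kadison--Schwarz machinery available for two-positive unital maps with the congruence trick already used in the proof of Theorem \ref{mainthm}, and then to exhibit an explicit small example for the sharpness claim. First I would record the positive block matrix attached to $Z$. Writing the polar decomposition $Z=W|Z|=|Z^*|W$ with $W$ the partial isometry factor, one checks that
$$
\begin{pmatrix} |Z^*| & Z \\ Z^* & |Z| \end{pmatrix}
=\begin{pmatrix} W|Z|^{1/2}\\ |Z|^{1/2}\end{pmatrix}\begin{pmatrix} |Z|^{1/2}W^* & |Z|^{1/2}\end{pmatrix}\ge 0 .
$$
Since $\Phi$ is two-positive, applying it entrywise preserves positivity of this $2\times 2$ block, so
$$
\begin{pmatrix} \Phi(|Z^*|) & \Phi(Z)\\ \Phi(Z^*) & \Phi(|Z|)\end{pmatrix}\ge 0 .
$$
Because $Z$ is a contraction we have $|Z^*|\le I$, and unitality together with positivity gives $\Phi(|Z^*|)\le\Phi(I)=I$; adding the positive matrix $\mathrm{diag}(I-\Phi(|Z^*|),0)$ replaces the upper-left corner by $I$, yielding the key relation $\left(\begin{smallmatrix} I & \Phi(Z)\\ \Phi(Z^*) & \Phi(|Z|)\end{smallmatrix}\right)\ge 0$.

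Next I would run the congruence argument of Theorem \ref{mainthm} on this block. With the polar decomposition $\Phi(Z)=V^*|\Phi(Z)|$ and $V$ unitary, compressing by the row $\begin{pmatrix}\alpha^{-1/2}V & -\alpha^{1/2}I\end{pmatrix}$ produces, for every $\alpha>0$,
$$
\alpha^{-1}I-2|\Phi(Z)|+\alpha\,\Phi(|Z|)\ge 0 .
$$
Choosing $\alpha=2$ gives exactly $|\Phi(Z)|\le\tfrac14 I+\Phi(|Z|)$. Only two-positivity and unitality are used here, so this settles the inequality.

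For sharpness I would first observe that a state ($m=1$) satisfies the stronger bound $|\Phi(Z)|\le\Phi(|Z|)$ with no additive term, since for Hermitian $Z$ with spectrum in $[-1,1]$ one has $|\sum p_iz_i|\le\sum p_i|z_i|$; thus the constant $\tfrac14$ is an intrinsically $m\ge2$ effect, which explains the dimensional hypotheses. Then I would exhibit the extremal pair in $n=3$, $m=2$: take the Hermitian contraction $Z=\mathrm{diag}(1,-1,0)$ and the unital completely positive map $\Phi(X)=AXA^*$, where $A$ is the coisometry with orthonormal rows $(1/\sqrt2,\,1/\sqrt2,\,0)$ and $(1/(2\sqrt2),\,-1/(2\sqrt2),\,\sqrt3/2)$. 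A direct computation gives $\Phi(Z)=\left(\begin{smallmatrix}0&1/2\\1/2&0\end{smallmatrix}\right)$, hence $|\Phi(Z)|=\tfrac12 I$, together with $\Phi(|Z|)=\mathrm{diag}(1,1/4)$, so that $|\Phi(Z)|-\Phi(|Z|)=\mathrm{diag}(-1/2,\,1/4)$ has largest eigenvalue exactly $1/4$. Hence no constant smaller than $1/4$ can work, even within unital completely positive maps and Hermitian contractions, and the cases of larger $n,m$ follow by padding with zeros.

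The block positivity and the congruence step are routine given the tools already developed. The main obstacle is the sharpness: one must recognize that the additive $\tfrac14 I$ disappears for states and is therefore a genuinely non-commutative $m\ge2$ phenomenon, and then pin down a rank-structured $3\times3$ example in which the off-diagonal part of $\Phi(Z)$ inflates $|\Phi(Z)|$ while $\Phi(|Z|)$ stays small on the relevant eigenvector, producing the exact value $1/4$.
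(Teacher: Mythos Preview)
Your argument is correct. For the inequality, the paper takes a slightly different route: it applies Theorem \ref{mainthm} with $\beta=1/4$ to the entrywise extension $\Phi^{(2)}$ acting on the Hermitian block $\left(\begin{smallmatrix}0&Z\\Z^*&0\end{smallmatrix}\right)$, obtaining a $2\times 2$ block inequality in $\bM_{2m}$ with an unknown unitary $V\in\bM_{2m}$, and then uses $\Phi(|Z|),\Phi(|Z^*|)\le I$ to absorb the $V$-conjugated term into $\tfrac14 I$. Your path is more direct: you bypass Theorem \ref{mainthm} altogether, invoke two-positivity once on the polar block $\left(\begin{smallmatrix}|Z^*|&Z\\Z^*&|Z|\end{smallmatrix}\right)$, enlarge the $(1,1)$ corner to $I$, and run the congruence with $\alpha=2$. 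This is cleaner because no auxiliary unitary in $\bM_{2m}$ appears and one never needs the spectral-decomposition machinery underlying Theorem \ref{mainthm}. For sharpness, the paper uses a two-Kraus map (a Schur product plus a rank-one compression) together with the off-diagonal Hermitian $H=\left(\begin{smallmatrix}0&1&0\\1&0&0\\0&0&0\end{smallmatrix}\right)$; your single-Kraus coisometry example with $Z=\mathrm{diag}(1,-1,0)$ is a unitary conjugate of the same situation and yields the identical pair $\Phi(Z)=\left(\begin{smallmatrix}0&1/2\\1/2&0\end{smallmatrix}\right)$, $\Phi(|Z|)=\mathrm{diag}(1,1/4)$, so the two constructions are equivalent in substance though yours is packaged more compactly.
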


\vskip 10pt
\begin{proof} $\Phi$ is unital means $\Phi(I)=I$. That $\Phi$ is two-positive means that the  linear map on $\bM_2(\bM_n)$,
$$
\begin{pmatrix}
A & B \\ C & D
\end{pmatrix} \mapsto 
\begin{pmatrix}
\Phi(A) & \Phi(B) \\ \Phi(C) & \Phi(D)
\end{pmatrix}
$$
is positive. Applying Theorem \ref{mainthm}
with $\beta=1/4$ to this map and a normal (Hermitian) operator in $\bM_2(\bM_n)$ of the form
$$\begin{pmatrix}0 & Z \\ Z^*& 0
\end{pmatrix}
$$
we obtain
$$
\begin{pmatrix}
|\Phi(Z^*) |& 0 \\ 0 & |\Phi(Z)|
\end{pmatrix}
\le 
\begin{pmatrix}
\Phi(|Z^*|) & 0 \\ 0 & \Phi(|Z|)
\end{pmatrix}
+\frac{1}{4} V \begin{pmatrix}
\Phi(|Z^*|) & 0 \\ 0 & \Phi(|Z|)
\end{pmatrix} V^*
$$
for some unitary $V\in\bM_2(\bM_n)$. Assuming further that $\Phi$ is (sub)unital  we infer the inequality of the theorem,
\begin{equation}\label{best}
|\Phi(Z)| \le \frac{1}{4}I + \Phi(|Z|)
\end{equation}
for all contractions $Z\in\bM_n$.

To check that $1/4$ is the best possible in \eqref{best}, consider the map $\Psi:\bM_3\to \bM_2$,
\begin{equation}\label{mapPsi}
T=\begin{pmatrix}
t_{1,1} & t_{1,2} & t_{1,3} \\
t_{2,1} & t_{2,2} & t_{2,3} \\
t_{3,1} & t_{3,2} & t_{3,3} 
\end{pmatrix}
\mapsto \Psi(T)=
\begin{pmatrix} t_{1,1} & \frac{1}{2} t_{1,2} \\ \frac{1}{2} t_{2,1} & \frac{1}{4} t_{2,2} + \frac{3}{4} t_{3,3}
\end{pmatrix}.
\end{equation}
This is a completely positive map as
$$
\begin{pmatrix} t_{1,1} &  t_{1,2} \\ t_{2,1} & t_{2,2}
\end{pmatrix}\mapsto
\begin{pmatrix} t_{1,1} &  \frac{1}{2}t_{1,2} \\ \frac{1}{2} t_{2,1} & \frac{1}{4} t_{2,2}
\end{pmatrix}
$$
is a Schur product with a matrix in $\bM_2^+$.
With the Hermitian matrix \begin{equation}\label{hermitianmatrixR}
H=\begin{pmatrix}
0 & 1 & 0 \\
1 & 0& 0 \\
0 & 0 &0 
\end{pmatrix}
\end{equation}
we have
\begin{equation}\label{eqfin}
|\Psi(H)| =\frac{1}{4}\begin{pmatrix} 0 & 0 \\ 0 & 1\end{pmatrix} + \Psi(|H|)
\end{equation}
and so $1/4$ cannot be replaced by anything smaller in \eqref{best}.
\end{proof}

\vskip 5pt
\noindent
Laboratoire de math\'ematiques, 

\noindent
Universit\'e de Bourgogne Franche-Comt\'e, 

\noindent
25 000 Besan\c{c}on, France.

\noindent
Email: jcbourin@univ-fcomte.fr

  \vskip 10pt
\noindent Department of mathematics, KNU-Center for Nonlinear
Dynamics,

\noindent
Kyungpook National University,

\noindent
 Daegu 702-701, Korea.

\noindent Email: eylee89@knu.ac.kr

\end{document}